\newtheorem{thm}{Theorem}[section]
\newtheorem{cor}[thm]{Corollary}
\newtheorem{lem}[thm]{Lemma}
\theoremstyle{definition}
\newtheorem{rem}[thm]{Remark}
\numberwithin{equation}{section}
\newcommand{\N}{\mathbb{N}}
\newcommand{\Z}{\mathbb{Z}_+}
\newcommand{\eps}{\varepsilon}
\newcommand{\orb}{\mathrm{Orb}}
\DeclareMathOperator{\diam}{diam}
\begin{document}
\title{Devaney chaos plus shadowing implies distributional chaos}
\author[Jian Li]{Jian Li}
\address[Jian Li]{Department of Mathematics, Shantou University, Shantou 515063, Guangdong, P.R. China}
\email{lijian09@mail.ustc.edu.cn}
\author[Jie Li]{Jie Li}
\address[Jie Li]{Academy of Mathematics and Systems Science, Chinese Academy of Sciences, Beijing, 100190, P.R. China; and School of Mathematical Sciences, University of Science and Technology of China, Hefei, Anhui, 230026, P.R. China}
\email{jiel0516@mail.ustc.edu.cn}
\author[Siming Tu]{Siming Tu}
\address[Siming Tu]{Centro de Modelamiento Matematico, Universidad de Chile, Av. Beauchef 851, Piso 7, Santiago de Chile, Chile}
\email{tsiming@dim.uchile.cl}
\subjclass[2010]{54H20, 37C50, 37B05}
\keywords{Devaney chaos, distributional chaos, shadowing property}

\begin{abstract}
We explore connections among the regional proximal relation,
the asymptotic relation and the distal relation for a topological dynamical system
with the shadowing property,
and show that if a Devaney chaotic system has the shadowing property then it is distributionally chaotic.
\end{abstract}

\maketitle
\date{\today}

\textbf{Chaos theory is a hot topic in area of topological dynamics, and different definitions of chaos have been introduced.
In this paper, we consider the relationship between two important notions of chaos. One is Devaney chaos, which means that the system
is topologically transitive, sensitive to initial conditions and the set of periodic points in the space is dense. Another is called distributional
chaos, which is related to the complexity of the trajectory behavior of points in the space. The notion of Devaney chaos
is a global property of the dynamical system, while distributional chaos is a local one. Neither Li-Yorke chaos nor distributional
chaos can imply Devaney chaos in general. So a natural question is that under what kinds of conditions can one relate these two types of
chaos. This paper gives one such condition that under the shadowing property one can imply distributional chaos from Devaney chaos,
building a connection between the global and local chaotic behaviors of such systems.}

\section{Introduction}
Throughout this article, by a topological dynamical system we mean a pair $(X, T)$,
where $X$ is a compact metric space with a metric $d$ and $T\colon X\to X$ is a continuous map.
Let $\Z$ and $\mathbb{N}$ be the sets of nonnegative integers and natural numbers, respectively.

The study of chaos theory in the topological dynamics has attracted a lot of attentions.
Unfortunately, there is still no definitive, widely accepted mathematical definition of chaos.
Due to different understanding of the complex behaviors of dynamical systems,
various different but closely related definitions of chaos are introduced,
such as Li-Yorke chaos, Devaney chaos, distributional chaos and so on.
It is important to understand the relationships among the various definitions of chaos.
We refer the reader to the survey \cite{Li2016}
for recent developments of this topic.

Following the ideas in \cite{Li1975}, we usually define the Li-Yorke chaos as follows.
A dynamical system $(X,T)$ is called \emph{Li-Yorke chaotic} if there is an uncountable
subset $S$ of $X$ such that for every two distinct points $x,y\in S$, one has
\[
\liminf_{n\to\infty}\ d(T^nx,T^ny)=0,\text{ and }\limsup_{n\to\infty}\ d(T^nx,T^ny)>0.
\]
In \cite{Li1975}, Li and Yorke showed that if a continuous interval map
$f\colon [0,1]\to[0,1]$ has a periodic point of period $3$, then it is Li-Yorke chaotic.
Moreover, if an interval map has positive topological entropy then it is Li-Yorke chaotic,
and this is in fact true for general systems, as shown by Blanchard, Glasner, Kolyada, and
Maass \cite{BGKM}, and in fact one can know from their proof that if $(X,T)$ has an ergodic invariant
measure which is not measurable distal then same conclusion holds. However, there exist some interval
maps which are Li-Yorke chaotic while have zero topological entropy \cite{Xiong1986,Smital1986}.
Interested readers can find in the survey \cite{Li2016} more information on Li-Yorke chaos and
recent developments related to it.

In~\cite{Schweizer1994}, using ideas from probability theory, Schweizer and Sm{\'{\i}}tal introduced
a strengthening of Li-Yorke chaos,  so called distributional chaos.
For $x,y$ in $X$ and $n\in\N$, we define a function
$\Phi^n_{xy}$ on the real line by
\[ \Phi^n_{xy}(t)=\tfrac 1 n \#\bigl(\{0\leq i \leq n-1\colon d(T^ix, T^iy)<t\}\bigr),\]
where $\#(\cdot)$ denotes the number of elements of a finite set.
It is clear that $\Phi^n_{xy}$ is a distribution function.
A dynamical system is called \emph{distributionally chaotic}
if there is an uncountable subset $S$ of $X$ such that
for every non-diagonal pair $(x,y)\in S\times S$ is ditributionally scrambled, i.e., one has
\begin{enumerate}
\item for every $t>0$, $\limsup_{n\to\infty} \Phi^n_{xy}(t)=1$, and
\item there exists some $\delta>0$ such that  $\liminf_{n\to\infty} \Phi^n_{xy}(\delta)=0$.
\end{enumerate}
Since then it has evolved into three variants of so-called distributional chaos DC1, DC2 and DC3 (ordered from strongest to weakest),
and the distributional chaos is the DC1. Pikula \cite{P2007} showed that positive topological entropy does not imply DC1 chaos.
Sm\'ital conjectured that it does imply DC2 (see e.g. \cite{S2}). This problem has been around for several years and several
partial solutions had been given (see e.g. \cite{Oprocha2012} for systems with uniformly positive entropy or \cite{S2} for some other cases) before it was finally solved by Downarowicz \cite{Downarowicz2014} (see \cite{HLY14} for an alternative proof). Note that while in general it is not true that positive topological entropy does not imply
distibutional chaos, it is true for interval maps. In fact, it turns out that  a continuous map $f\colon [0,1]\to[0,1]$
has positive topological entropy if and only if it is distributionally chaotic \cite{Schweizer1994}.

Another popular definition of chaos was introduced by Devaney in~\cite{Devaney1989}.
A dynamical system $(X, T)$ is called \emph{Devaney chaotic} if it satisfies the following
three properties:
\begin{enumerate}
  \item $(X,T)$ is topological transitive;
  \item $(X, T)$ has sensitive dependence on initial conditions;
  \item the set of periodic points of $(X, T)$ is dense in $X$.
\end{enumerate}
Note that the condition of sensitivity is redundant in the definition of Devaney chaos~\cite{BBCDS92}.

It is clear that Devaney chaos is a global property of the dynamical system,
while Li-Yorke chaos or distributional chaos is a local one. Neither Li-Yorke chaos nor
distributional chaos can imply Devaney chaos in general.
In 2002, Huang and Ye showed that Devaney chaos implies Li-Yorke chaos \cite{Huang2002}.
But Devaney chaos does not always imply distributional chaos,
as there are some examples of Devaney chaotic systems
without distributionally scrambled pairs \cite{Oprocha2006},
see  also  \cite{Falniowski2015} for another approach.
But in some special case, Devaney chaos can imply distributional chaos.
It is shown in~\cite{Li1993} that an interval map has positive topological
entropy if and only if it has a Devaney chaotic subsystem,
then also if and only if it is distributionally chaotic.
For a subshift of finite type it is
distributionally chaotic if and only if it has a Devaney chaotic subsystem~\cite{Wang2005}.
It is well known that a subshift is of finite type if and only
if it has the shadowing property~\cite{Walters1978}.
Recall that a sequence $\{x_n\}_{n=0}^\infty$ is a \emph{$\delta$-pseudo-orbit} for $T$
if $d(x_{{n+1}},Tx_{n})<\delta$ for all $n\in\Z$, and
is \emph{$\epsilon$-traced} by a point $x\in X$ if $d(T^n x,x_n)<\epsilon$ for all $n\in\Z$.
We say that a dynamical system $(X,T)$ has the \emph{shadowing property}
if for any $\eps>0$ we can find a $\delta>0$ such that
each $\delta$-pseudo-orbit for $T$ is $\eps$-traced by some point of $X$.

In \cite{Arai2007} Arai and Chinen introduced the concept of $P$-chaos
by changing the condition of transitivity in the definition of Devaney chaos to the shadowing property,
and they proved that when the state space $X$ is connected if a dynamical system $(X,T)$ is $P$-chaotic,
then it is Devaney chaotic with positive topological entropy and
exhibits a distributionally scrambled pair.
Note that in zero-dimensional case $P$-chaotic system may have zero topological entropy,
see~\cite[Example 4.5]{Arai2007}.
We do not know whether $P$-chaos with positive topological entropy can always imply distributional chaos.

Another natural question inspired by~\cite{Arai2007} is
whether Devaney chaos under the shadowing property can imply distributional chaos.
In this note we shall give a positive answer to this question.
In fact, we will prove a bit more. More specifically, we show that
it implies the generalized multivariant distributional chaos~\cite{Tan2009}.
Recall that for a dynamical system $(X,T)$ and $n\geq 2$,
an $n$-tuple $(x_1,x_2,\dotsc,x_n)\in X^n$ is \emph{distributionally $n$-scrambled}
if it satisfies
\begin{enumerate}
\item for any $t>0$, the upper density of $\{n\in\Z\colon d(T^nx_i,T^nx_j)<t,\ 1\leq i<j\leq n\}$
is one, and
\item there exists some $\delta>0$ such that
the upper density of $\{n\in\Z\colon d(T^nx_i,T^nx_j)>\delta,\ 1\leq i<j\leq n \}$ is one.
\end{enumerate}
A subset $C$ of $X$ is called \emph{distributionally $n$-scrambled} if
any pairwise distinct $n$ points in $C$ form a distributionally $n$-scrambled tuple.
A dynamical system $(X,T)$ is called \emph{distributionally $n$-chaotic} if
there is an uncountable distributionally $n$-scrambled set in $X$.

Similarly, if the separated constant $\delta$ is uniform for all pairwise distinct $n$-tuples in $C$,
we can define \emph{distributionally $n$-$\delta$-scrambled sets}
and \emph{distributional $n$-$\delta$-chaos}.
Note that the notions of distributional $n$-chaos and distributional $(n+1)$-chaos
are different in general \cite{Tan2014,Li2013O}.

Now we are ready to state our main result as follows.
\begin{thm}\label{thm:Devany}
Let $(X,T)$ be a non-periodic transitive system and has the shadowing property. Then
\begin{enumerate}
\item\label{thm:Devaney:1} if $(X,T)$ has a fixed point, then there exists a dense Mycielski subset $K$ of $X$ such that
$K$ is distributionally $n$-$\delta_n$-scrambled for all $n\geq 2$ and some $\delta_n>0$.
\item \label{thm:Devaney:2} if $(X,T)$ has a periodic point, then  there exists a Mycielski subset $K$ of $X$ such that
$K$ is distributionally $n$-$\delta_n$-scrambled for all $n\geq 2$ and some $\delta_n>0$.
\end{enumerate}
In particular, a Devaney chaotic system with the shadowing property is distributionally chaotic.
\end{thm}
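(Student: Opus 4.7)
The plan is to establish (1) via Mycielski's theorem and to reduce (2) to (1) by passing to a suitable power of $T$. Since transitivity plus non-periodicity forces $X$ to be perfect, and since Mycielski's theorem converts a residual subset of $X^n$ into a dense Mycielski subset of $X$ whose distinct $n$-tuples all lie in that residual set, it suffices to produce, for each $n\geq 2$, a constant $\delta_n>0$ such that the collection of distributionally $n$-$\delta_n$-scrambled tuples is residual in $X^n$. Intersecting these residual sets across $n$ and invoking the Mycielski construction once yields a single Mycielski set witnessing the conclusion for all $n$ simultaneously.

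Residuality reduces to density, since the two conditions defining upper density $1$ are each countable intersections of open conditions. Fix a fixed point $p$, and for each $n$ choose, using transitivity together with the perfectness of $X$, pairwise disjoint open sets $W_1,\dotsc,W_n$ whose closures are mutually $4\delta_n$-separated, together with a transitive point $z$ and times $s_1<\dotsb<s_n$ with $T^{s_i}z\in W_i$. Given any basic open cylinder $U_1\times\dotsb\times U_n$ off the diagonal and any tolerance $t>0$, the shadowing property permits construction of $n$ pseudo-orbits $\xi^{(1)},\dotsc,\xi^{(n)}$ starting in $U_1,\dotsc,U_n$ and organised into alternating blocks: \emph{close blocks} of length $a_\ell$ during which every $\xi^{(i)}$ hovers in a $t$-neighborhood of $p$ (possible because $Tp=p$), and \emph{far blocks} of length $b_\ell$ during which $\xi^{(i)}$ is steered through $W_i$ so that the coordinates are pairwise $3\delta_n$-separated along a long subinterval. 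The ability to concatenate such segments is guaranteed by the chain-transitivity implied by transitivity and shadowing. Choosing $a_\ell,b_\ell$ to grow so fast that each block dwarfs the sum of all previous blocks, with alternating dominance along two subsequences, the shadowing points $x_i$ inherit $d(T^kx_i,T^kx_j)<3t$ on a set of upper density $1$ and $d(T^kx_i,T^kx_j)>2\delta_n$ on a set of upper density $1$. Letting $t=1/m$ and varying the cylinder places such tuples in a dense $G_\delta$ subset of distributionally $n$-$\delta_n$-scrambled tuples of $X^n$, which is exactly what Mycielski's theorem consumes.

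For part (2), let $p$ be periodic of prime period $q>1$, and let $X_0\subset X$ be a $T^q$-transitive component containing $p$. Then $(X_0,T^q)$ is transitive and non-periodic, inherits the shadowing property by standard arguments, and has the fixed point $p$. Applying (1) inside $X_0$ yields a dense Mycielski subset $K\subset X_0$ distributionally $n$-$\delta_n$-scrambled for $T^q$, which immediately produces a (no longer necessarily dense in $X$) Mycielski subset distributionally $n$-$\delta_n'$-scrambled for $T$ for some $\delta_n'>0$, using uniform continuity of $T,T^2,\dotsc,T^{q-1}$ to pass the density statements from the $T^q$-orbit to the full $T$-orbit. The main obstacle is the calibration of the block lengths $a_\ell,b_\ell$ so that both the close-time and the far-time sets simultaneously achieve upper density $1$; this is the classical fast-growth trick, but it must be executed uniformly in the cylinder $U_1\times\dotsb\times U_n$ and in the tolerance $t$, so that one obtains residuality rather than mere existence of scrambled tuples, which is precisely what the Mycielski step requires.
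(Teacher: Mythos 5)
There is a genuine gap in the heart of part (1): the mechanism you propose for the ``far blocks'' cannot produce a separation set of upper density one. Steering the $i$-th pseudo-orbit through $W_i$ (via a transitive point hitting $W_i$ at time $s_i$) only guarantees that the $n$ coordinates are pairwise $3\delta_n$-separated at isolated clock times; nothing forces $T(W_i)$ and $T(W_j)$ to remain separated, so repeating such visits inside a far block yields a separation set of density at most $1/L$ (where $L$ is the return time), not density tending to $1$. To make the fast-growth block trick work for the condition $\overline{d}(\{k\colon d(T^kx_i,T^kx_j)>\delta_n\ \forall i<j\})=1$, you need, for every $N$, an $n$-tuple whose orbits stay pairwise $\geq 3\delta_n$-separated for $N$ consecutive steps \emph{and} which can be targeted by pseudo-orbits starting in arbitrary open sets. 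The hypotheses (non-periodic, transitive, shadowing, one fixed point) hand you only a single periodic point, so such uniformly distal tuples must be manufactured, and that is the actual content the paper supplies: its Lemma~\ref{lem:distal-eps} builds, for each $n$, a tuple $(v_1,\dotsc,v_n)$ that is simultaneously $2\eps$-distal and a sensitive tuple, by factoring a subsystem of some $(Y,T^m)$ onto a full shift, pulling back a distal tuple of periodic shift points as an entropy tuple (Theorem~\ref{thm:lift}), and invoking Theorem~\ref{thm:entr-tup-Sen-tup}. Sensitivity of the tuple is what lets every regionally proximal tuple be approximated by one whose pseudo-orbit is eventually glued onto $(v_1,\dotsc,v_n)$. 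Your proposal omits any source of long-term separation, and without it the construction only proves a Li--Yorke-type conclusion, not DC1.

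Two further points. First, your overall architecture (reduce to residuality of the scrambled relation in $X^n$, then apply Mycielski) matches the paper's Theorem~\ref{thm:distScra}, and your ``close blocks'' near the fixed point are a sound, if more hands-on, substitute for the paper's route through $Q_n(X,T)=X^n$ and Lemma~\ref{lem:asy-eps}; that half is fine. Second, in part (2) you assert that the $T^q$-transitive component $X_0$ ``inherits the shadowing property by standard arguments.'' Shadowing does pass from $T$ to $T^q$ on $X$, but restricting to the subsystem $X_0$ is not automatic (the components $Y_i$ need not be clopen and subsystems generally do not inherit shadowing). The paper sidesteps this: it uses the fixed-point argument only to compute $Q_n(Y_0,T^p)=Y_0^n$ (which needs no shadowing), embeds this into $Q_n(X,T)$, and then runs the density argument in the ambient system $(X,T)$, where shadowing is available. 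Your backward/forward propagation of the density statements from $T^q$-time to $T$-time via uniform continuity is correct, but the shadowing-inheritance step needs to be replaced by an argument of this kind.
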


This paper is organized as follows. For preparation we recall in Section \ref{sect:preliminaries}
some basic definitions and results that will be needed.
In Section \ref{sect:main-proof} we introduce the multivariant version of
the notions of regional proximity, $\eps$-asymptoticity (for given $\eps>0$) and distality,
and explore their connections for dynamical system with the shadowing property,
see Lemmas~\ref{lem:asy-eps} and~\ref{lem:distal-eps}.
As an application we yield the proof of Theorem~\ref{thm:Devany}.

\section{Preliminaries}\label{sect:preliminaries}
In this section we introduce some basic notions and facts in topological dynamics, which will be used later.

Let $F$ be a subset of $\Z$. The \emph{upper density} of $F$ is defined by
\[\overline{d}(F)=\limsup_{n\to\infty}\frac 1 n \#(F\cap\{0,1,\dots,n-1\}).\]

Let $X$ be a compact metric space. A subset $A$ of $X$ is called
a \emph{Cantor set} if it is homeomorphic to  the standard Cantor ternary set and
a \emph{Mycielski set} if it can be expressed as a union of countably many Cantor sets.
For convenience we restate here a
version of Mycielski's theorem \cite[Theorem~1]{Mycielski1964} which we shall use.
\begin{thm}[Mycielski Theorem]\label{thm:Mycielski-thm}
Let $X$ be a perfect compact metric space.
If for every $n\geq 2$, $R_n$ is a dense $G_\delta$ subset of $X^n$, then there exists a
dense Mycielski subset $K$ of $X$ such that for any distinct $n$ points $x_1,x_2,\dotsc,x_n\in K$,
the tuple $(x_1,x_2,\dotsc,x_n)$ is in $R_n$.
\end{thm}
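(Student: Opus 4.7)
My plan is to realize $K$ as a countable union $\bigcup_{m\geq 1} C_m$ of Cantor sets, with $C_m$ contained in the $m$-th member $V_m$ of a fixed countable base of nonempty open sets of $X$, coordinated so that every $n$-tuple of distinct points drawn from $\bigcup_m C_m$ lies in $R_n$. Write each $R_n$ as $\bigcap_{j\geq 1} G_{n,j}$ with $G_{n,j}$ open and dense in $X^n$. The construction proceeds by induction on a stage $k$, at which I maintain a finite family $\mathcal{F}_k$ of nonempty open sets with pairwise disjoint closures, organized into $k$ finite binary trees (the $m$-th tree being ``planted'' inside $V_m$ at stage $m$), such that each member has diameter less than $1/k$, and such that for every $n,j\leq k$ and every choice of $n$ distinct members $U_1,\dots,U_n\in\mathcal{F}_k$ one has $U_1\times\cdots\times U_n\subseteq G_{n,j}$.

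The step from stage $k$ to stage $k+1$ goes in two phases. First, since $X$ is perfect, each $U\in\mathcal{F}_k$ is infinite, so I split it into two disjoint nonempty open subsets with closures in $U$ and diameters below $1/(k+1)$; I also choose a small new nonempty open set inside $V_{k+1}$, disjoint from the closures of all current pieces, as the root of the $(k+1)$-th tree. This produces a provisional family $\mathcal{F}_{k+1}'$. Second, I enumerate the finitely many pairs $(n,j)$ with $n,j\leq k+1$ and, for each such pair, the finitely many distinct $n$-tuples $(U_1,\dots,U_n)$ from the current version of $\mathcal{F}_{k+1}'$. For each tuple, density of $G_{n,j}$ supplies a point $(y_1,\dots,y_n)\in G_{n,j}\cap(U_1\times\cdots\times U_n)$, and openness supplies an open product neighborhood $W_1\times\cdots\times W_n\subseteq G_{n,j}$ of that point; I then replace each $U_i$ by $U_i\cap W_i$. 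The crucial observation is that shrinking a member never destroys a previously achieved containment $U'_1\times\cdots\times U'_m\subseteq G_{m,j'}$, since the product only becomes smaller. Hence after finitely many such shrinkings the invariant is restored, and I define $\mathcal{F}_{k+1}$ to be the final refined family.

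Passing to the limit, for each fixed $m$ the $m$-th tree yields a descending sequence of finite unions whose intersection $C_m$ is homeomorphic to the Cantor set: from stage $m$ onward it is a full binary tree with pieces of vanishing diameter and pairwise disjoint closures, so the standard dyadic coding gives a homeomorphism from $\{0,1\}^{\mathbb{N}}$ onto $C_m$. Moreover $C_m\subseteq V_m$, so $K:=\bigcup_m C_m$ meets every basic open set and is dense, and by construction it is a countable union of Cantor sets, hence Mycielski. Given distinct $x_1,\dots,x_n\in K$ and any fixed $j$, choose a stage $k\geq\max(n,j)$ so large that the members of $\mathcal{F}_k$ containing the $x_i$ are distinct (possible because $\diam<1/k\to 0$ forces eventual separation); the invariant gives $(x_1,\dots,x_n)\in G_{n,j}$. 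Since $j$ is arbitrary, $(x_1,\dots,x_n)\in\bigcap_j G_{n,j}=R_n$.

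The main obstacle I anticipate is the bookkeeping for the combined splitting-and-planting step: introducing the new $(k+1)$-th tree creates fresh ``mixed'' tuples involving its root together with pieces from older trees, and all of these must be simultaneously coerced into the relevant $G_{n,j}$'s without undoing earlier work. The sequential-shrinking trick succeeds precisely because the containment conditions are monotone under shrinking, so processing the finitely many constraints one at a time preserves every constraint already met; this is where openness of each $G_{n,j}$ is essential, while the $G_\delta$ structure of $R_n$ enters only when passing to the limit.
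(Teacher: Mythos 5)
The paper does not actually prove this statement: it is restated verbatim from Mycielski's 1964 paper and used as a black box, so there is no internal argument to compare yours against. Judged on its own, your proof is the standard Cantor-scheme (Kuratowski--Mycielski) argument and is correct: the inductive invariant, the monotonicity of the containments $U_1\times\cdots\times U_n\subseteq G_{n,j}$ under shrinking, the dyadic coding of each limit set $C_m$ as a Cantor set with $C_m\subseteq V_m$, and the final separation argument for distinct points of $K$ all work as you describe. One step deserves a line more care: when you plant the root of the $(k+1)$-th tree inside $V_{k+1}$, the closures of the already-split pieces could a priori cover $V_{k+1}$ (for instance when the pieces are clopen), so ``choose a small open set disjoint from the closures of all current pieces'' is not automatic. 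The fix is easy: either $V_{k+1}$ meets the complement of $\bigcup_{U\in\mathcal{F}_k}\overline{U}$, in which case there is room outside everything, or else (since the finitely many boundaries $\partial U$ are nowhere dense) $V_{k+1}$ meets some $U\in\mathcal{F}_k$, and one then performs the splitting of that particular $U$ so as to reserve room for the new root inside the infinite open set $V_{k+1}\cap U$, keeping its closure disjoint from the two children. With that adjustment the construction goes through exactly as written.
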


Let $(X,T)$ be a dynamical system.
If a closed subset $Y$ of $X$ is $T$-invariant, i.e., $T(Y)\subset Y$,
then the restriction $(Y, T|_Y)$ of $(X,T)$ to $Y$ is itself a dynamical system,
and we call it a \emph{subsystem} of $(X,T)$. If there is no ambiguous,
we will denote the restriction $T|_Y$ by $T$ for simplicity.

A dynamical system $(X,T)$ is called \emph{transitive} if for every two non-empty open subsets $U$ and $V$
of $X$ there is an $n\in \N$ such that $T^n U\cap V\neq\emptyset$. A point $x\in X$ is called a
\emph{transitive point} if the orbit of $x$, $\orb(x,T)=\{x,Tx,T^2x,\dotsc\}$, is in dense in $X$ i.e.,
$\overline{\orb(x,T)}=X$. In our setting, when $X$ is a
compact metric space, if $(X,T)$ is transitive then
the collection of transitive points is a dense $G_\delta$ set in $X$.
If a transitive system has an isolated point then the system consists of just one periodic orbit.
So for a non-periodic transitive system $(X,T)$, the space $X$ is always perfect.

A dynamical system $(X,T)$ is called \emph{weakly mixing} if the product system $(X\times X,T\times T)$ is transitive.
For any $n\in\N$, the $n$-fold product system of $(X,T)$ is denoted by $(X^n, T^{(n)})$.
It is not hard to see that for any $n\in\N$, $(X^n, T^{(n)})$ is weakly mixing provided that $(X,T)$ is.

Consider $\{0,1\}$ as a topology space with the discrete topology
and let $\Sigma=\{0,1\}^{\Z}$ with the product topology.
Then $\Sigma$ is a Cantor space.
We write the elements of $\Sigma$ in the form as $x=x_0x_1x_2\dotsc$.
The \emph{shift map} from $\sigma$ to itself is defined by
$\sigma(x)_n=x_{n+1}$ for $n\in\Z$. It is clear that $\sigma$ is a continuous surjection.
The dynamical system $(\Sigma,\sigma)$ is called the \emph{full shift}.
If $X$ is non-empty, closed and $\sigma$-invariant
then we call the dynamical system $(X, \sigma)$ a \emph{subshift}.

For an open cover $\mathcal{U}$ of $X$, let $N(\mathcal{U})$ denote the smallest cardinality
of a subcover of $\mathcal{U}$. By the compactness of $X$,
$N(\mathcal{U})$ is always finite.
If $\mathcal{U}$ and $\mathcal{V}$ are two open covers of $X$,
then $\mathcal{U}\bigvee \mathcal{V}=\{U\cap V:U\in \mathcal{U}, V\in \mathcal{V}\}$
is called their common refinement.
Let $\mathcal{U}_n=\mathcal{U}\bigvee  T^{-1}\mathcal{U}\bigvee \dotsb\bigvee  T^{-n+1}\mathcal{U}$,
where $T^{-k}\mathcal{U}=\{T^{-k}U: U\in \mathcal{U}\}$.
The \emph{topological entropy of $\mathcal{U}$ with respect to $T$} is defined by
\[h_{\textrm{top}}(\mathcal{U},T)=\lim_{n\to+\infty}\frac{\log N(\mathcal{U}_n)}{n},\]
and the \emph{topological entropy of $(X,T)$} is
\[h_{\textrm{top}}(T)=\sup h_{\textrm{top}}(\mathcal{U},T), \]
where the supremum ranges over all open covers $\mathcal{U}$ of $X$.
We refer to the textbook \cite{Wal82} for more details related to the topological entropy.

A non-diagonal pair $(x_1 ,x_2 )\in X^2$ is said to be an \emph{entropy pair}
if whenever $W_1$ and $W_2$ are closed disjoint
neighbourhoods of $x_1$ and $x_2$,
the open cover $\{W_1^c , W_2^c \}$ has positive topological entropy \cite{Blanchard1992}.
More generally, we call a non-diagonal tuple $\mathbf{x} = (x_1 , \dots, x_n)\in X^n$ an
\emph{entropy tuple} if whenever $W_1, \dotsc , W_l$ are closed pairwise
disjoint neighbourhoods of the distinct points in the list
$x_1 , \dots , x_n$, the open cover
$\{W_1^c , \dots , W_l^c \}$ has positive topological entropy \cite{Huang2006}.
We say that a dynamical system $(X,T)$ has \emph{uniformly positive entropy of all orders}
if for each $n\geq 2$, every non-diagonal tuple in $X^n$ is an entropy tuple.

Let $(X, T)$ and $(Y,S)$ be two dynamical systems.
We say that a continuous map $\pi\colon X\to Y$  is a \emph{factor map}
if $\pi$ is onto and intertwines the actions, i.e., $\pi\circ T=S\circ\pi$,
and in which case $(Y,S)$ is called a \emph{factor} of $(X, T)$.
The following theorem says that the entropy tuples can be lifted from a factor.

\begin{thm}[\cite{Huang2006}, Proposition 2.4]\label{thm:lift}
Let $\pi\colon (X,T)\to (Y,S)$ be a factor map. If $(y_1,\ldots,y_n)$ is an entropy $n$-tuple in $Y$ then
there exist $x_1,\ldots,x_n\in X$ such that $\pi(x_i)=y_i$ and $(x_1,\ldots,x_n)$ is an entropy
$n$-tuple in $X$.
\end{thm}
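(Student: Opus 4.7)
The plan is to reduce Theorem~\ref{thm:Devany} to an application of the Mycielski theorem (Theorem~\ref{thm:Mycielski-thm}). Since $(X,T)$ is non-periodic and transitive, $X$ is perfect, so it suffices to construct, for each $n\geq 2$, a dense $G_\delta$ subset $R_n$ of $X^n$ such that every tuple in $R_n$ is distributionally $n$-$\delta_n$-scrambled for a uniform constant $\delta_n>0$. The desired Mycielski set $K$ is then produced by Mycielski's theorem applied to the family $\{R_n\}_{n\geq 2}$.

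For each $n\geq 2$ and $t>0$ I introduce the two sets
\[
 A_n(t)=\bigl\{(x_1,\dots,x_n)\in X^n:\overline d\{k\in\Z:\max_{i<j}d(T^kx_i,T^kx_j)<t\}=1\bigr\},
\]
\[
 B_n(\delta)=\bigl\{(x_1,\dots,x_n)\in X^n:\overline d\{k\in\Z:\min_{i<j}d(T^kx_i,T^kx_j)>\delta\}=1\bigr\},
\]
and take $R_n=\bigcap_{t\in\mathbb Q_+}A_n(t)\cap B_n(\delta_n)$ for a suitable $\delta_n>0$ to be chosen. Each of $A_n(t)$ and $B_n(\delta)$ is a $G_\delta$ subset of $X^n$ by writing "upper density $=1$" as a countable intersection of "the set hits $\{0,\dots,N-1\}$ in at least $(1-1/k)N$ positions for some $N\geq M$", which is an open condition in the tuple. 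So the content is to show each is \emph{dense}, for which I intend to invoke the two structural lemmas promised in Section~\ref{sect:main-proof}: Lemma~\ref{lem:asy-eps} on multivariant $\varepsilon$-asymptoticity under shadowing, and Lemma~\ref{lem:distal-eps} on multivariant distality under shadowing.

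The density of $A_n(t)$ is where the fixed point $p\in X$ enters. Given any $(x_1,\dots,x_n)\in X^n$ and $\eta>0$, I would use transitivity and a suitable density-one sequence of blocks to design, for each $i$, a $\delta$-pseudo-orbit that alternates between long stretches of constant value $p$ and short excursions approximating $x_i$; shadowing then produces points $y_i$ close to $x_i$ whose orbits, on a set of times of density one, all lie within $t/2$ of $p$, hence pairwise within $t$. This is exactly the multivariant $\varepsilon$-asymptoticity to the diagonal that Lemma~\ref{lem:asy-eps} should encode. For the density of $B_n(\delta_n)$, I would fix $n$ pairwise distinct points $q_1,\dots,q_n\in X$ (available since $X$ is perfect) and set $\delta_n=\tfrac14\min_{i\neq j}d(q_i,q_j)$. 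Then, starting from an arbitrary approximation $(x_1,\dots,x_n)$ and using weak mixing of $(X^n,T^{(n)})$ (which for transitive shadowing systems can be extracted from Lemma~\ref{lem:distal-eps} together with the fixed point trick), I construct pseudo-orbits that synchronously visit neighborhoods of $(q_1,\dots,q_n)$ on a set of times of density one, and shadow them to get orbits that are $\delta_n$-separated on that set.

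The main obstacle is obtaining both the density-one approximation and the density-one separation simultaneously and arbitrarily close to any prescribed tuple; this is the essential content of the two lemmas in Section~\ref{sect:main-proof}, and the delicate point is the careful design of a single $\delta$-pseudo-orbit that interleaves arbitrarily long "collapse near $p$" blocks with arbitrarily long "spread near $(q_1,\dots,q_n)$" blocks whose lengths grow so that both sets of good times have upper density one. Once $R_n$ is shown to be dense $G_\delta$, Mycielski's theorem yields a dense Mycielski $K$ all of whose distinct $n$-subtuples lie in $R_n$, proving~\eqref{thm:Devaney:1}. For~\eqref{thm:Devaney:2}, if $T$ has only a periodic point $p$ of period $m>1$, I pass to $(Y,T^m)$ with $Y=\overline{\bigcup_{k\geq 0}T^{km}U}$ for a small neighborhood $U$ of $p$, on which $T^m$ has a fixed point and inherits transitivity and shadowing; applying~\eqref{thm:Devaney:1} inside $Y$ yields a (not necessarily dense in $X$) Mycielski set with the required scrambled properties for $T$. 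The last sentence of the theorem follows because Devaney chaos supplies both transitivity and a dense set of periodic points, hence in particular a periodic point and the non-periodicity of the whole system.
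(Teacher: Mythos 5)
Your proposal does not address the statement it is supposed to prove. The statement in question is Theorem~\ref{thm:lift}, the lifting of entropy tuples through a factor map $\pi\colon (X,T)\to (Y,S)$: given an entropy $n$-tuple $(y_1,\dots,y_n)$ in $Y$, one must produce preimages $x_i\in\pi^{-1}(y_i)$ forming an entropy $n$-tuple in $X$. What you have written instead is an outline of the proof of the main result, Theorem~\ref{thm:Devany}, via Mycielski's theorem and the two structural lemmas of Section~\ref{sect:main-proof}. Nothing in your text mentions factor maps, covers, or topological entropy, so as a proof of Theorem~\ref{thm:lift} it is vacuous. (Note also that the paper itself does not prove this statement; it imports it from Proposition~2.4 of the cited work of Huang and Ye.)

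For the record, a correct argument for the actual statement runs along the following lines. If $W_1,\dots,W_l$ are closed pairwise disjoint neighbourhoods of the distinct points among $y_1,\dots,y_n$, then $\pi^{-1}(W_1),\dots,\pi^{-1}(W_l)$ are closed pairwise disjoint subsets of $X$, and since $h_{\textrm{top}}(\pi^{-1}\mathcal{U},T)\geq h_{\textrm{top}}(\mathcal{U},S)$ for any open cover $\mathcal{U}$ of $Y$, the cover $\{\pi^{-1}(W_1)^c,\dots,\pi^{-1}(W_l)^c\}$ has positive entropy. One then locates the points $x_i$ inside the compact fibers $\pi^{-1}(y_i)$ by a compactness argument: for each choice of shrinking neighbourhoods one obtains nonempty closed sets of candidate tuples in $\prod_i\pi^{-1}(y_i)$, these sets have the finite intersection property, and any tuple in their intersection is an entropy $n$-tuple projecting to $(y_1,\dots,y_n)$. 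If you intend to supply a proof for Theorem~\ref{thm:lift}, this is the structure it needs; your current text should instead be compared against the proof of Theorem~\ref{thm:Devany}.
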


A dynamical system $(X,T)$ is said to have \emph{sensitive dependence on initial conditions}
(or just \emph{sensitive}) if there exists some $\delta>0$ such that for each $x\in X$ and each $\eps>0$
there are $y\in X$ with $d(x,y)<\eps$ and $k\in\N$ such that $d(T^kx,T^ky)>\delta$.
In \cite{Xiong2005} Xiong generalized the notion of sensitivity to multi-variant sensitivity.
Using ideas from the local entropy theory, Ye and Zhang~\cite{Ye2008}
introduced the notion of sensitive tuples.
An $n$-tuple $(x_1,x_2,\dotsc,x_n)\in X^n$ is called \emph{sensitive}
if $x_i\neq x_j$ for all $1\leq i<j\leq n$ and
for any $n\geq 2$, any neighborhood $U_i$ of $x_i$, $i=1,2,\dotsc,n$, and any
non-empty open subset $U$ of $X$
there exist $k\in\N$ and $y_i\in U$ such that $T^k(y_i)\in U_i$ for $i=1,2,\dotsc,n$.
We will use the follow result.
\begin{thm}[\cite{Ye2008}, Theorem 4.4]\label{thm:entr-tup-Sen-tup}
If a dynamical system $(X,T)$ is transitive, then for every $n\geq 2$,
any entropy $n$-tuple is sensitive.
\end{thm}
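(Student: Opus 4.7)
The plan is to apply the Mycielski theorem (Theorem~\ref{thm:Mycielski-thm}) to a sequence of dense $G_\delta$ sets $R_n \subset X^n$, where $R_n$ will consist of $n$-tuples that are distributionally $n$-$\delta_n$-scrambled for some $\delta_n>0$. Since a non-periodic transitive system has perfect phase space, this immediately yields a dense Mycielski set $K$ such that every pairwise distinct $n$-tuple from $K$ lies in $R_n$, proving part~\eqref{thm:Devaney:1}. The statement that Devaney chaos plus shadowing implies distributional chaos is then the case $n=2$ of part~\eqref{thm:Devaney:2}, since a Devaney chaotic system has a periodic point.

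The easier half is showing that, once $\delta_n>0$ is fixed, the set of $n$-tuples satisfying the two distributional-scrambling conditions is $G_\delta$. Both conditions are upper-density statements; for instance the condition $\overline{d}\bigl(\{m\in\Z : d(T^m x_i, T^m x_j) > \delta_n \text{ for all } 1\le i<j\le n\}\bigr) = 1$ rewrites as $\bigcap_{\ell}\bigcap_{M}\bigcup_{N>M} E^{(2)}_{\ell,N}$, where $E^{(2)}_{\ell,N}$ is the open set of tuples whose block $[0,N)$ counts at least $(1-1/\ell)N$ indices with all pairwise distances strictly above $\delta_n$. The analogous rewriting works for condition (1) ranging over rational $t>0$. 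What remains is the genuinely dynamical content: density of $R_n$ and the choice of $\delta_n$.

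Density is where the shadowing property and the fixed point $p$ are used, and this will be organized through the multivariant regional-proximity and $\eps$-asymptoticity lemmas (Lemmas~\ref{lem:asy-eps} and~\ref{lem:distal-eps}) announced in Section~\ref{sect:main-proof}. Given a target $n$-tuple $(y_1,\dots,y_n)$ and a small $\gamma>0$, the idea is to build an $n$-tuple $(x_1,\dots,x_n)$ within $\gamma$ of $(y_1,\dots,y_n)$ whose orbits are forced, by shadowing carefully designed pseudo-orbits, to alternate in geometrically growing blocks between a \emph{proximal phase}, where every pseudo-orbit is routed through a tiny neighbourhood of the fixed point $p$ and held there (driving all pairwise distances below any prescribed $t$), and a \emph{dispersing phase}, where the pseudo-orbits are steered towards $n$ reference points whose $T$-orbits pass through $\delta_n$-separated regions of $X$ (here transitivity plus shadowing supplies the required chain-mixing to realize such routings, and Theorems~\ref{thm:lift} and~\ref{thm:entr-tup-Sen-tup} lifted from a Bernoulli subshift factor supply an $n$-sensitive tuple and hence the uniform separation constant $\delta_n$). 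Choosing block lengths so that the proximal phases occupy windows of density $\to 1$ along one subsequence of times and the dispersing phases occupy windows of density $\to 1$ along another achieves both conditions simultaneously.

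For part~\eqref{thm:Devaney:2}, with only a periodic point $p$ of some period $k\ge 1$, I would pass to the subsystem $(Y,T^k)$ where $Y$ is the closure of the $T^k$-orbit of a transitive point; $Y$ is $T^k$-invariant, $T^k|_Y$ is transitive, inherits the shadowing property, and now contains $p$ as a genuine fixed point. Applying part~\eqref{thm:Devaney:1} inside $(Y,T^k)$ gives a dense Mycielski subset of $Y$ that is distributionally $n$-$\delta_n$-scrambled for $T^k$, and hence (possibly after shrinking $\delta_n$) for $T$ on $X$; this Mycielski set need not be dense in $X$, matching the statement. The main obstacle throughout is the simultaneous management of density-$1$ proximality and density-$1$ separation while staying within $\gamma$ of a prescribed target tuple: the block sizes, the shadowing accuracy at each scale, and the choice of dispersing reference points must all be balanced, and this balancing is precisely what the two lemmas of Section~\ref{sect:main-proof} are tailored to deliver.
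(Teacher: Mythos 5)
Your proposal does not address the statement at hand. The statement to be proved is Theorem~\ref{thm:entr-tup-Sen-tup}: in a transitive system, every entropy $n$-tuple is a sensitive $n$-tuple. This is a result quoted from Ye and Zhang (\cite{Ye2008}, Theorem 4.4); the paper itself offers no proof of it, and any proof would have to connect the positivity of $h_{\textrm{top}}(\{W_1^c,\dots,W_l^c\},T)$ for covers built from disjoint neighbourhoods of the $x_i$ with the existence, for every nonempty open $U$ and every choice of neighbourhoods $U_i\ni x_i$, of a single time $k$ and points $y_i\in U$ with $T^k y_i\in U_i$. None of that appears anywhere in what you wrote.

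What you have instead sketched is a proof of the main result, Theorem~\ref{thm:Devany} (the Mycielski argument, the $G_\delta$ rewriting of the distributional-scrambling conditions, the use of Lemmas~\ref{lem:asy-eps} and~\ref{lem:distal-eps}, the reduction of the periodic-point case to the fixed-point case). That outline is broadly consistent with the paper's Section~\ref{sect:main-proof}, but it \emph{uses} Theorem~\ref{thm:entr-tup-Sen-tup} as an ingredient (you invoke it explicitly to produce the sensitive tuple supplying the separation constant $\delta_n$) rather than proving it. As it stands the proposal is circular with respect to the assigned statement: you cannot establish that entropy tuples are sensitive by appealing to a construction whose key step is precisely that entropy tuples are sensitive. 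A correct submission would need to work directly with the definitions of entropy tuple and sensitive tuple --- typically by showing that if some tuple of neighbourhoods $U_1,\dots,U_n$ and open set $U$ witnessed a failure of sensitivity, then transitivity would force the cover $\{W_1^c,\dots,W_l^c\}$ to have zero topological entropy, contradicting the entropy-tuple hypothesis --- or simply by reproducing the argument of \cite{Ye2008}.
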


\section{Proof of the main result}\label{sect:main-proof}
In this section, we will prove the main result Theorem~\ref{thm:Devany}.
First we need to introduce some notions.

For $n\geq 2$, an $n$-tuple $(x_1,x_2,\dotsc,x_n)\in X^n$ is called \emph{regionally proximal}
if for every $\eps>0$ there exist $y_1,y_2,\dotsc,y_n\in X$ and $k\in\N$
such that $d(x_i,y_i)<\eps$ for $i=1,2,\dotsc,n$ and
$\diam(\{T^ky_1,T^ky_2,\dotsc,T^ky_n\})<\eps$.
For a given $\eps>0$, an $n$-tuple $(x_1,x_2,\dotsc,x_n)\in X^n$ is called
\emph{$\eps$-asymptotic} if
\[\limsup_{k\to\infty} \ \diam(\{T^kx_1,T^kx_2,\dotsc,T^kx_n\})<\eps.\]

\begin{rem}
The regionally proximal relation plays an important role in the study of topological dynamics
One of the important properties of this relation is that
it is a closed invariant equivalence relation for any minimal system and
the quotient space obtained from this relation is the maximal equicontinuous factor of the original system.
We refer the interested readers to \cite{Auslander1988} and \cite{EE2014} for details on this topic.
\end{rem}

Denote by $Q_n(X,T)$ and $Asy_n^\eps(X,T)$ the collection of all regionally proximal $n$-tuples
and $\eps$-asymptotic $n$-tuples respectively.
It is clear that $Q_n(X,T)$ is always closed and $\bigcap_{\eps>0}Asy_n^\eps(X,T) \subset Q_n(X,T)$.
For a dynamical system with the shadowing property,
we have the following relationship.
\begin{lem} \label{lem:asy-eps}
If a dynamical system $(X,T)$ has the shadowing property, then
for every $n\geq 2$ and $\eps>0$, $Q_n(X,T)\subset \overline{Asy_n^\eps(X,T)}$.
\end{lem}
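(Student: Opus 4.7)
The plan is to approximate any regionally proximal $n$-tuple by concatenating approximate orbits into a single merged orbit and tracing the resulting pseudo-orbit via the shadowing property. Fix $(x_1,\dotsc,x_n)\in Q_n(X,T)$ and $\eps>0$; given an arbitrary approximation parameter $\mu\in(0,\eps)$, the goal is to produce an $\eps$-asymptotic $n$-tuple $(x_1',\dotsc,x_n')$ with $d(x_i,x_i')<\mu$ for each $i$.

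First I would invoke the shadowing property to pick $\delta\in(0,\mu/2)$ such that every $\delta$-pseudo-orbit is $(\mu/2)$-traced. Using that $(x_1,\dotsc,x_n)$ is regionally proximal, I then choose points $y_1,\dotsc,y_n\in X$ with $d(x_i,y_i)<\mu/2$ and an integer $k\geq 1$ with $\diam\{T^ky_1,\dotsc,T^ky_n\}<\delta$. Setting $z=T^ky_1$, for each $i$ I define the sequence
\[\xi^{(i)}_j=T^jy_i\text{ for }0\leq j<k,\qquad \xi^{(i)}_j=T^{j-k}z\text{ for }j\geq k.\]
The only potentially nonzero consecutive gap in $\{\xi^{(i)}_j\}$ occurs at step $k$, where it equals $d(z,T^ky_i)<\delta$; hence this is a $\delta$-pseudo-orbit, and shadowing provides $x_i'\in X$ with $d(T^jx_i',\xi^{(i)}_j)<\mu/2$ for all $j\geq 0$.

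Finally I would verify both conclusions. At $j=0$ the triangle inequality gives $d(x_i,x_i')\leq d(x_i,y_i)+d(y_i,x_i')<\mu$. For each $j\geq 0$, both $T^{k+j}x_i'$ and $T^{k+j}x_l'$ lie within $\mu/2$ of the common point $T^jz$, so $\diam\{T^{k+j}x_1',\dotsc,T^{k+j}x_n'\}<\mu<\eps$, whence $(x_1',\dotsc,x_n')\in Asy_n^\eps(X,T)$. Since $\mu>0$ was arbitrary, this yields the desired density.

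The construction itself is the standard shadowing glueing trick, so the main obstacle is not conceptual. The only care needed is in the bookkeeping of constants: the approximation tolerance $\mu$ (governing how close $x_i'$ is to $x_i$) and the asymptotic tolerance $\eps$ (fixed in the statement) are independent, and one must simply insist that $\mu<\eps$ so that both the closeness requirement and the $\eps$-asymptotic requirement are met simultaneously.
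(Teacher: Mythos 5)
Your proposal is correct and follows essentially the same argument as the paper: apply the definition of regional proximality at scale $\delta$, glue the initial segment of the orbit of $y_i$ to the tail of the orbit of a single common point, and trace the resulting $\delta$-pseudo-orbit. The constant bookkeeping (tracing at $\mu/2$ so that the diameters at times $\geq k$ are below $\mu<\eps$) matches the paper's choice of $\eta<\eps/2$.
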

\begin{proof}
Fix $(x_1,x_2,\dotsc,x_n)\in Q_n(X,T)$ and $\eps>0$.
For every $0<\eta<\eps/2$, pick $0<\delta<\eta$ provided that every $\delta$-pseudo-orbit is $\eta$-traced.

Since $(x_1,x_2,\dotsc,x_n)\in Q_n(X,T)$, there exist $y_1,y_2,\dotsc,y_n\in X$ and
$k\in\N$ such that
\[
    d(x_i,y_i)<\delta,\ i=1,2,\dotsc,n
\]
and
\[
    \diam (\{T^{k_1}y_1,T^{k_1}y_2,\dotsc,T^{k_1}y_n\})<\delta.
\]
For every $i=1,2,\dotsc,n$, the sequence
\[
    y_i,Ty_i,\dotsc,T^{k-1}y_i,T^ky_1,T^{k+1}y_1,\dotsc
\]
is a $\delta$-pseudo-orbit.
Thus by the shadowing property for each $i=1,\ldots,n$, there exists $z_i\in X$ such that
\begin{align*}
d(T^jz_i,T^jy_i)&<\eta, \ j=0,1,\ldots,k-1, \text{ and }\\
d(T^jz_i,T^jy_1)&<\eta, \ j=k,k+1,\ldots,
\end{align*}
which implies that $(z_1,z_2,\dotsc,z_n)$ is $\eps$-asymptotic.
For each $i=1,\ldots,n$,
\[
    d(x_i,z_i)<d(x_i,y_i)+d(y_i,z_i)<\delta+\eta<2\eta,
\]
which ends the proof.
\end{proof}

For a given $\eps>0$, an $n$-tuple $(x_1,x_2,\dotsc,x_n)\in X^n$ is called
\emph{$\eps$-distal} if
\[\liminf_{k\to\infty} \min_{1\leq i<j\leq n} d(T^kx_i,T^kx_j)>\eps.\]
We say that an $n$-tuple is \emph{distal} if it is $\eps$-distal for some $\eps>0$.
Denote by $D_n^\eps(X,T)$ the collection of all $\eps$-distal $n$-tuples.

\begin{lem} \label{lem:distal-eps}
Let $(X,T)$ be a transitive system with the shadowing property.
If $(X,T)$ is sensitive, then for every $n\geq 2$ there exists $\eps>0$
such that  $Q_n(X,T)\subset \overline{D_n^\eps(X,T)}$.
\end{lem}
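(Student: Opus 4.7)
The plan is to mirror the proof of Lemma~\ref{lem:asy-eps}, but instead of concatenating every coordinate's prefix orbit onto one common future, to concatenate each coordinate onto a different, permanently separated future; these separating futures will be chosen from $n$ pairwise disjoint periodic orbits. First, I would fix the constant $\eps$ and the $n$ ``distal targets'' once and for all. Under the hypotheses $(X,T)$ is transitive but cannot be equicontinuous (sensitivity rules this out), and for a transitive system with the shadowing property this is known to force density of periodic points in $X$. Hence $X$ contains infinitely many pairwise disjoint periodic orbits; pick $n$ of them, $O_1,\ldots,O_n$, fix representatives $a_i\in O_i$, and set
\[
3\eps \;=\; \min_{1\leq i<j\leq n} d(O_i, O_j) \;>\; 0,
\]
which will be the constant promised by the lemma.

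Given $(x_1,\ldots,x_n)\in Q_n(X,T)$ and $\eta>0$, I would set $\eps'=\min(\eps,\eta/2)$ and use the shadowing property to choose $\delta\in(0,\eta/2)$ so that every $\delta$-pseudo-orbit is $\eps'$-traced. Regional proximity supplies $y_1,\ldots,y_n$ with $d(x_i,y_i)<\delta$ and an integer $k$ with $\diam\{T^k y_1,\ldots,T^k y_n\}<\delta$. Transitivity gives chain-transitivity, so for each $i$ there is a $\delta$-chain $\pi_i^{(0)}=T^k y_i,\pi_i^{(1)},\ldots,\pi_i^{(M_i)}=a_i$. I then splice
\[
\xi_i\colon\ y_i, Ty_i,\ldots, T^{k-1}y_i,\ \pi_i^{(0)},\ldots,\pi_i^{(M_i)},\ Ta_i, T^2 a_i, \ldots,
\]
which is a $\delta$-pseudo-orbit, and let $z_i$ shadow $\xi_i$ with error $\eps'$. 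Verification is immediate: $d(x_i,z_i)\leq d(x_i,y_i)+d(y_i,z_i)<\delta+\eps'<\eta$; and for every $j\geq k+\max_i M_i$, since $T^j z_i$ lies within $\eps'$ of a point of $O_i$, for $i\neq l$,
\[
d(T^j z_i, T^j z_l) \;\geq\; d(O_i, O_l) - 2\eps' \;\geq\; 3\eps - 2\eps \;=\; \eps,
\]
so $\liminf_{j\to\infty} \min_{i\neq l} d(T^j z_i, T^j z_l)\geq \eps$, and $(z_1,\ldots,z_n)$ is $\eps$-distal (shrinking $\eps$ by an arbitrarily small amount to get a strict inequality if wanted).

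The main obstacle is the very first step: securing the $n$ pairwise disjoint periodic orbits with a uniform separation $3\eps>0$. This reduces to the standard but substantive shadowing-theoretic fact that transitivity, sensitivity, and the shadowing property together force density of periodic points in $X$; once that input is in hand, the remainder of the argument is a direct shadowing-driven construction paralleling Lemma~\ref{lem:asy-eps}, together with a one-line triangle inequality.
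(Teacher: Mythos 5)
Your construction collapses at its first step: the ``standard fact'' you invoke --- that transitivity, sensitivity and the shadowing property together force density of periodic points --- is false. The product of the full shift with an odometer (the very example discussed in Remark~\ref{rem:Devaney-DC}) is transitive, sensitive and has the shadowing property, yet it has \emph{no} periodic points whatsoever, since any periodic point would project to a periodic point of the odometer. Shadowing of a periodic $\delta$-pseudo-orbit only produces a tracing point, not a periodic one (that upgrade needs expansivity), so there is no way to manufacture your $n$ pairwise disjoint periodic orbits $O_1,\dotsc,O_n$ in this generality, and with them the separation constant $3\eps=\min_{i<j}d(O_i,O_j)$ evaporates. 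Everything downstream depends on these targets, so the proof does not go through.

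The paper's proof supplies exactly the ingredient you are missing, but sources it differently: by \cite[Proposition 3.1]{Li2013} a sensitive transitive system with shadowing has, for some $m$, a subsystem of $(X,T^m)$ factoring onto the full shift; distinct periodic points are chosen \emph{in the symbolic factor} and lifted via Theorem~\ref{thm:lift} to a tuple $(v_1,\dotsc,v_n)$ in $X$ that is simultaneously distal and an entropy tuple, hence (by Theorem~\ref{thm:entr-tup-Sen-tup}) a \emph{sensitive tuple}. The sensitive-tuple property then steers points from the single small ball around $T^{k_1}y_1$ into neighborhoods of all the $v_i$ at one common time $k_2$, which is what your chain-transitivity step was trying to do. Note also a secondary defect in your splicing: your connecting chains have different lengths $M_i$, which you absorb only because you measure distance to the periodic orbit $O_i$ \emph{as a set}; for non-periodic distal targets (the only ones available in general) the resulting time offsets $T^{j-k-M_i}a_i$ versus $T^{j-k-M_l}a_l$ would destroy the separation estimate, whereas the paper's common time $k_2$ avoids this entirely.
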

\begin{proof}
We first show that for every $n\geq 2$, there exists an $n$-tuple which is both sensitive and distal.
Since $(X,T)$ is sensitive, by \cite[Proposition 3.1]{Li2013}
there exists an $m\in\N$, a subsystem $(Y,T^m)$ of $(X,T^m)$ and a factor map $\pi$ from $(Y,T^m)$
to a full shift $(\Sigma,\sigma)$.
Since the set of periodic points of $(\Sigma,\sigma)$ is dense,
pick $n$ pairwise distinct periodic points $u_1,u_2,\dotsc,u_n$ in $\Sigma$.
Then $(u_1,u_2,\dotsc,u_n)$ is distal.
As $(\Sigma,\sigma)$ has uniformly positive entropy of all orders,
$(u_1,u_2,\dotsc,u_n)$  is also an entropy $n$-tuple.
By Theorem \ref{thm:lift} there exists an entropy $n$-tuple $(v_1,v_2,\dotsc,v_n)\in Y^n$ such that
$\pi (v_i)=u_i$ for $i=1,2,\dotsc,n$.
Moreover, $(v_1,v_2,\dotsc,v_n)$ is also distal in $(Y,T^m)$, as its image under $\pi$ is distal.
Then we have that $(v_1,v_2,\dotsc,v_n)$ is an entropy and distal $n$-tuple for $(X,T)$,
because so is it for $(Y,T^m)$.
Now by~Theorem \ref{thm:entr-tup-Sen-tup}, $(v_1,v_2,\dotsc,v_n)$ is  sensitive.

From the definition of distal tuple, we can pick an $\eps>0$ such that
\[\liminf_{k\to\infty} \min_{1\leq i<j\leq n} d(T^kv_i,T^kv_j)>2\eps.\]
For every $0<\eta<\eps/2$, we pick $0<\delta<\eta$ provided that every $2\delta$-pseudo-orbit is $\eta$-traced.
Now fix $(x_1,x_2,\dotsc,x_n)\in Q_n(X,T)$. There exist $y_1,y_2,\dotsc,y_n\in X$ and $k_1\in\N$
such that
\[ d(x_i,y_i)<\delta,\ i=1,2,\dotsc,n\]
and
\[\diam (\{T^{k_1}y_1,T^{k_1}y_2,\dotsc,T^{k_1}y_n\})<\delta.\]
As $(v_1,v_2,\dotsc,v_n)$ is a sensitive tuple, there exist $z_1,z_2,\dotsc,z_n\in X$ and $k_2\in\N$
such that
\[
    d(z_i, T^{k_1}y_1)<\delta
\]
and
\[
    d(T^{k_2}z_i,v_i)<\delta, i=1,2,\dotsc,n.
\]
From the above construction it is not hard to see that for every $i=1,2,\dotsc,n$, the sequence
\[
    y_i,Ty_i,\dotsc,T^{k_1-1}y_i,z_i,Tz_i,\dotsc T^{k_2-1}z_i,v_i,Tv_i,\dotsc
\]
is a $2\delta$-pseudo-orbit.
Hence there exists $w_i\in X$ such that
\begin{align*}
&d(T^jw_i,T^jy_i)<\eta,\ j=0,1,\dotsc,k_1-1,\\
&d(T^jw_i,T^{j-k_1}z_i)<\eta,\ j=k_1,k_1+1,\dotsc,k_1+k_2-1, \text{ and }\\
& d(T^jw_i,T^{j-k_1-k_2}v_i)<\eta,\ j=k_1+k_2,k_1+k_2+1,\dotsc.
\end{align*}
Then
\[
\liminf_{k\to\infty} \min_{1\leq i<j\leq n} d(T^kw_i,T^kw_j)>
\liminf_{k\to\infty} \min_{1\leq i<j\leq n} d(T^kv_i,T^kv_j)-2\eta>2\eps-2\eta>\eps.
\]
Moreover, for each $i=1,2,\dotsc,n$,
\[
d(w_i,x_i)<d(w_i,y_i)+d(y_i,x_i)<\delta+\eta<2\eta,
\]
which ends the proof.
\end{proof}

\begin{thm}\label{thm:distScra}
Let $(X,T)$ be a non-trivial transitive system with the shadowing property.
If $Q_n(X,T)=X^n$ for some $n\ge 2$, then there exists a dense
Mycielski subset $K$ of $X$ such that
$K$ is distributionally $n$-$\delta_n$-scrambled for  some $\delta_n>0$.
\end{thm}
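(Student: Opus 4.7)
The plan is to produce a dense $G_\delta$ subset $R\subset X^n$ of distributionally $n$-$\delta_n$-scrambled tuples and then invoke Theorem \ref{thm:Mycielski-thm}. The scrambling constant $\delta_n$ will come from Lemma \ref{lem:distal-eps}. To apply that lemma I first note that $(X,T)$ must be sensitive: since $X$ is perfect (non-trivial transitive) and $Q_n(X,T)=X^n$ with $n\geq 2$, projecting to any two coordinates shows $Q_2(X,T)=X^2\neq\Delta_X$, which is incompatible with the almost-equicontinuous alternative of Akin--Auslander--Berg (whose regional proximal relation is trivial). Lemma \ref{lem:distal-eps} then yields some $\eps>0$ with $D_n^{\eps}(X,T)$ dense in $X^n$; set $\delta_n=\eps/2$. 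Simultaneously, Lemma \ref{lem:asy-eps} combined with $Q_n=X^n$ makes $\mathrm{Asy}_n^{\gamma}(X,T)$ dense in $X^n$ for every $\gamma>0$.

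For each $m\geq 1$ set
\begin{align*}
A_m&=\bigl\{(x_1,\dots,x_n):\exists k\geq 1,\ \tfrac{1}{k}\#\{0\leq j<k:\diam\{T^jx_1,\dots,T^jx_n\}<\tfrac{1}{m}\}>1-\tfrac{1}{m}\bigr\},\\
B_m&=\bigl\{(x_1,\dots,x_n):\exists k\geq 1,\ \tfrac{1}{k}\#\{0\leq j<k:\min_{i<i'}d(T^jx_i,T^jx_{i'})>\delta_n\}>1-\tfrac{1}{m}\bigr\}.
\end{align*}
Continuity of $T$ makes both sets open in $X^n$, and any tuple in $R:=\bigcap_m(A_m\cap B_m)$ automatically meets the two density-one conditions needed for distributional $n$-$\delta_n$-scrambling.

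The main technical step is to show that each $A_m\cap B_m$ is dense in $X^n$. Fix a basic open box $V_1\times\cdots\times V_n$ and $m\geq 1$, then choose $\eta>0$ small (much less than $1/m$ and $\eps/4$) and let $\delta=\delta(\eta)$ be a shadowing constant. Starting from any prescribed tuple in $V_1\times\cdots\times V_n$, I would splice two pseudo-orbit constructions. First, use Lemma \ref{lem:asy-eps} (with tolerance $1/m-2\eta$) to perturb to a tuple $(y_1,\dots,y_n)$ whose iterates are eventually within $1/m-2\eta$ of each other, then run the orbits up to a large time $L$ so that the near-diagonal fraction on $[0,L)$ exceeds $1-1/m$. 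At time $L$ the iterates $T^Ly_i$ cluster near a single point $y^\ast$; following the splicing argument from the proof of Lemma \ref{lem:distal-eps}, use the sensitivity of the distal entropy tuple produced there to find $z_1,\dots,z_n$ close to $y^\ast$ (hence to each $T^Ly_i$) whose later iterates $T^kz_i$ land within $\delta$ of a distal tuple $(v_1,\dots,v_n)$ satisfying $\min_{i<i'}d(T^sv_i,T^sv_{i'})>2\eps$ for $s\geq M_0$. The coordinatewise sequence $y_i,Ty_i,\dots,T^{L-1}y_i,z_i,Tz_i,\dots,T^{k-1}z_i,v_i,Tv_i,\dots$ is then a valid $2\delta$-pseudo-orbit; by choosing its total length $L+k+M$ with $M\gg L$ one makes the $\delta_n$-separated fraction on $[0,L+k+M]$ also exceed $1-1/m$. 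Shadowing produces $w_i$ within $\eta$ of each $y_i$, so $w_i\in V_i$ and $(w_1,\dots,w_n)\in A_m\cap B_m$.

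By Baire, $R=\bigcap_m(A_m\cap B_m)$ is dense $G_\delta$ in $X^n$, and Theorem \ref{thm:Mycielski-thm} applied with this $R$ (and $R_k=X^k$ for $k\neq n$) produces a dense Mycielski set $K\subset X$ such that any $n$ pairwise distinct points of $K$ form a tuple in $R$; that is, $K$ is distributionally $n$-$\delta_n$-scrambled. The hardest step is the multi-stage pseudo-orbit splicing in the density argument: the parameters $\eta$, $L$, $M$, the tolerance of $z_i$ near $y^\ast$, and the tolerance of $T^kz_i$ near $v_i$ must be chosen in the correct order so that the single shadowing point $w_i$ simultaneously lies in the prescribed open set, achieves the required near-diagonal frequency over $[0,L)$, and achieves the required $\delta_n$-separated frequency over $[0,L+k+M]$.
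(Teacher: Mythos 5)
Your overall strategy is the paper's: use $Q_n(X,T)=X^n$ together with Lemmas~\ref{lem:asy-eps} and~\ref{lem:distal-eps} to make the ``close'' and ``$\delta_n$-separated'' conditions dense, present the scrambled tuples as a $G_\delta$ set, and finish with Theorem~\ref{thm:Mycielski-thm}. However, there are two concrete errors. First, your set $B_m$ does not do what you claim: membership in $\bigcap_m B_m$ only asserts that for each $m$ \emph{some} $k\ge 1$ witnesses frequency $>1-1/m$, and nothing forces these witnesses to be large. A tuple whose points are $\delta_n$-separated at time $0$ but never again lies in every $B_m$ (take $k=1$), yet the upper density of its separation times is $0$; such tuples exist in abundance here (Lemma~\ref{lem:asy-eps} produces them). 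So ``any tuple in $R$ automatically meets the two density-one conditions'' is false, and the Mycielski set you extract need not be distributionally scrambled. The fix is the paper's formulation: require the witness $k$ to exceed an arbitrary $p$, i.e.\ use $\bigcap_p\bigcup_{k\ge p}$ as in the definition of $S_n(\delta)$ in the paper's proof. (Your $A_m$ happens to survive for non-diagonal tuples only because time $0$ eventually fails the diameter test, forcing witnesses to grow; this is an accident you do not note.)

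Second, the splice in your density argument is not a valid $2\delta$-pseudo-orbit. You perturb to $(y_1,\dots,y_n)$ whose iterates are eventually within $1/m-2\eta$ of each other, and then jump from $T^{L-1}y_i$ to a point $z_i$ chosen $\delta$-close to a single point $y^\ast$; the jump $d(z_i,T^Ly_i)$ is then only bounded by $\delta+(1/m-2\eta)\approx 1/m$, which can be far larger than $2\delta$. In the proof of Lemma~\ref{lem:distal-eps} this works because the diameter at the splice time is $<\delta$ by the choice of the regional-proximality tolerance; you must likewise force the asymptotic tolerance at the splice time below $\delta$ (which the construction inside Lemma~\ref{lem:asy-eps} does give, since all orbits there $\eta$-trace the single orbit of $y_1$), not merely below $1/m$. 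Both defects are repairable, but note that the entire splicing is avoidable: once $Asy_n^{1/m}(X,T)$ and $D_n^{\delta_n}(X,T)$ are dense (by the two lemmas plus $Q_n=X^n$), each of your corrected $A_m$ and $B_m$ is dense open on its own, and the Baire category theorem gives the dense $G_\delta$ intersection directly --- this is exactly how the paper proceeds, treating $R_n$ and $S_n(\delta_n)$ separately. Finally, your derivation of sensitivity from the Akin--Auslander--Berg dichotomy rests on the unjustified claim that almost equicontinuous transitive systems have trivial regional proximal relation; the paper instead uses the sensitive-versus-equicontinuous dichotomy valid for transitive systems with shadowing, which is the safe route.
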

\begin{proof}
Let $R_n$ be the collection of all $n$-tuples $(x_1,x_2,\ldots,x_n)$ satisfying
for any $t>0$ the upper density of $\{k\in\Z\colon d(T^kx_i,T^kx_j)<t,\ 1\leq i<j\leq n\}$ is one,
and $S_n(\delta)$ be the collection of all $n$-tuples satisfying
the upper density of $\{k\in\Z\colon d(T^kx_i,T^kx_j)>\delta,\ 1\leq i<j\leq n \}$ is one.
It is not hard to see that
\begin{align*}
R_n=&\
\bigcap_{\ell=1}^\infty\bigcap_{q=1}^\infty\bigcap_{p=1}^\infty\bigcup_{m=p}^\infty
\bigg\{(x_1,x_2,\ldots,x_n)\in X^n\colon \\
&\frac{\#\big(\big\{k\in \{0,1,\ldots,m-1\}\colon d(T^kx_i,T^kx_j)<\frac{1}{\ell},\ 1\leq i<j\leq n\big\}\big)}{m}>1-\frac{1}{q}\bigg\}
\end{align*}
and
\begin{align*}
S_n(\delta)=&\ \bigcap_{q=1}^\infty\bigcap_{p=1}^\infty\bigcup_{m=p}^\infty
\bigg\{(x_1,x_2,\ldots,x_n)\in X^n\colon \\
&\frac{\#\big(\big\{k\in \{0,1,\ldots,m-1\}\colon d(T^kx_i,T^kx_j)>\delta,\ 1\leq i<j\leq n\big\}\big)}{m}>1-\frac{1}{q}\bigg\}.
\end{align*}
Then $R_n$ and $S_n(\delta)$ are $G_{\delta}$ subsets of $X^n$.

Now assume $Q_n(X,T)=X^n$ for some $n\ge 2$. It is not hard to see that  $\bigcap_{\eps>0} Asy_n^{\eps}(X,T)\subset R_n$.
This, together with Lemma~\ref{lem:asy-eps} and $Q_n(X,T)=X^n$,
immediately implies that $R_n$ is dense in $X^n$.
Since $(X,T)$ is transitive and has the shadowing property,
it is either sensitive or equicontinuous,
see for instance~\cite[Theorem 6]{Moothathu2011} or~\cite[Theorem 3.3]{Li2013}.
But $Q_n(X,T)=X^n$ then $(X,T)$ can not be equicontinuous and hence it is sensitive.
By Lemma~\ref{lem:distal-eps} there is a $\delta_n>0$ such that
$Q_n(X,T)\subset \overline{D_n^{\delta_n}(X,T)}$.
Observe that  $D_n^\eps(X,T)\subset S_n(\eps)$ for each $\eps>0$ and $Q_n(X,T)=X^n$,
then $S_n(\delta_n)$ is directly dense in $X^n$.

Note that the collection of distributionally $n$-$\delta_n$-scrambled tuples
is just the intersection of $R_n$ and $S_n(\delta)$,
so it is also a dense $G_{\delta}$ subset of $X^n$ by the Baire category theorem.
As $X$ is perfect, the result then follows from the Mycielski Theorem.
\end{proof}

Recall that a dynamical system $(X,T)$ is weakly mixing
if the product system $(X\times X,T\times T)$ is transitive.
It is well known that if $(X,T)$ is weakly mixing, then the $n$-th product system $(X^n,T^{(n)})$
is also transitive, where $T^{(n)}=X\times X\times\dotsb \times X$ ($n$-times).
For a weakly mixing system $(X,T)$,
it is not hard to see that $Q_n(X,T)=X^n$ for all $n\geq 2$.
So We have the following corollary.

\begin{cor}\label{cor:weak-mixing-shadowing}
If a non-trivial weakly mixing system $(X,T)$ has shadowing property,
then there exists a dense
Mycielski subset $K$ of $X$ such that
$K$ is distributionally $n$-$\delta_n$-scrambled for all $n\geq 2$ and some $\delta_n>0$.
\end{cor}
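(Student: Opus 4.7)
The strategy is to reduce the corollary to Theorem~\ref{thm:distScra} by checking that weak mixing forces $Q_n(X,T)=X^n$ for every $n\geq 2$, and then to merge the per-$n$ conclusions into a single Mycielski set via Theorem~\ref{thm:Mycielski-thm}.

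The first step is to verify $Q_n(X,T)=X^n$ for each $n\geq 2$. Fix $n\geq 2$, $\eps>0$ and $(x_1,\dots,x_n)\in X^n$. Weak mixing guarantees, as noted in the paragraph preceding the corollary, that $(X^n,T^{(n)})$ is transitive, so its set of transitive points is a dense $G_\delta$ subset of $X^n$. Pick such a transitive point $(y_1,\dots,y_n)$ with $d(x_i,y_i)<\eps$ for every $i$. Its $T^{(n)}$-orbit is dense in $X^n$, hence meets the $\eps$-neighborhood of the diagonal $\{(z,\dots,z):z\in X\}$; this produces a $k\in\N$ with $\diam(\{T^k y_1,\dots,T^k y_n\})<\eps$. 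Therefore $(x_1,\dots,x_n)\in Q_n(X,T)$, and since $(x_1,\dots,x_n)$ was arbitrary, $Q_n(X,T)=X^n$.

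With $Q_n(X,T)=X^n$ established for every $n\geq 2$, I would apply Theorem~\ref{thm:distScra}. Because $(X,T)$ is a non-trivial weakly mixing system it is transitive, and since any isolated point in a transitive system gives a single periodic orbit (incompatible with non-triviality plus weak mixing), $X$ is perfect. The proof of Theorem~\ref{thm:distScra} in fact shows that for each $n\geq 2$ there is a constant $\delta_n>0$ such that the set $R_n$ of distributionally $n$-$\delta_n$-scrambled tuples is a dense $G_\delta$ subset of $X^n$.

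The last step, and the only genuine obstacle, is that a direct application of Theorem~\ref{thm:distScra} for each $n$ separately would yield a different Mycielski set $K_n$ for each $n$; the corollary asks for a single $K$ valid for all $n$ simultaneously. To handle this I would appeal to Theorem~\ref{thm:Mycielski-thm} once, applied to the countable family $\{R_n\}_{n\geq 2}$ of dense $G_\delta$ sets. Since $X$ is perfect, the theorem produces a dense Mycielski subset $K\subset X$ such that for every $n\geq 2$ each tuple of $n$ pairwise distinct points of $K$ belongs to $R_n$. This $K$ is then distributionally $n$-$\delta_n$-scrambled for all $n\geq 2$, which is the desired conclusion.
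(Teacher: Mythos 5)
Your proposal is correct and follows essentially the same route as the paper: weak mixing gives transitivity of $(X^n,T^{(n)})$, hence $Q_n(X,T)=X^n$ for all $n$, and then Theorem~\ref{thm:distScra} (whose proof produces dense $G_\delta$ sets of scrambled tuples for every $n$, fed into a single application of Theorem~\ref{thm:Mycielski-thm}) yields one dense Mycielski set working for all $n$ simultaneously. The only cosmetic point is that landing in the $\eps$-neighborhood of the diagonal gives diameter at most $2\eps$ rather than $\eps$, which is fixed by starting from $\eps/2$ (or by aiming the dense orbit at $U^n$ for a ball $U$ of diameter less than $\eps$).
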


It is shown in~\cite{Huang2002} that a transitive system with a fixed point
contains a dense uncountable scrambled set
(by Mycielski Theorem \ref{thm:Mycielski-thm} this uncountable scramble set
can be chosen to be a Mycielski set).
Furthermore, if the system is additionally sensitive,
then it contains a dense Mycielski $\delta$-scrambled set for some $\delta>0$ \cite{Mai2004}.
In \cite{Xiong2005} Xiong generalized these results to $n$-scrambled sets for any $n\geq 2$.
Our main result Theorem \ref{thm:Devany} states that under the shadowing property,
these results can be improved to distributional $n$-scrambled sets.
Now we are going to prove the main result.

\begin{proof}[Proof of Theorem \ref{thm:Devany}]
\eqref{thm:Devaney:1}
Assume $(X,T)$ has a fixed point $x_0$. Since $(X,T)$ is transitive,
there exists a transitive point $x\in X$ and an increasing sequence $\{n_i\}$ such that $T^{n_i}x\to x_0$.
As $x_0$ is a fixed point, we have $T^{n_i+j}x\to x_0$ for any fixed $j\in\Z$.
This implies that each $n$-tuple $(T^{i_1} x, \ldots, T^{i_n} x)$ is regionally proximal
for all $i_1,\ldots,i_n\in\Z$.
Since $x$ is a transitive point, the orbit of $x$ is dense.
So the collection
\[\{(T^{i_1} x, \ldots, T^{i_n} x)\colon i_1,\ldots,i_n\in\Z\}\]
is dense in $X^n$ and then $Q_n(X,T)=X^n$ for all $n\geq 2$.
Now the result follows from~Theorem~\ref {thm:distScra}.

\eqref{thm:Devaney:2}
Assume that $(X,T)$ has a periodic point of period $p$.
Let $x$ be a transitive point and $Y_i=\overline{\orb(T^ix,T^p)}$ for $i=0,1,\dotsc,p-1$.
There exists $q\in \N$ with $q|p$ such that we have a decomposition
\[
    X=Y_0\cup Y_1\cup \dots\cup Y_{q-1},
\]
where $Y_i\neq Y_j$ for $0\leq i<j<q$ and $T(Y_i)=Y_{i+1\pmod{q}}$.
Moreover, the interior of $Y_0$ (in $X$) is dense in $Y_0$.
Now consider the system $(Y_0,T^p)$, it is transitive and has a fixed point.
By~\eqref{thm:Devaney:1}, we know that for each $n\geq 2$, $Q_n(Y_0,T^p)=Y_0^n$.
Then $Y_0^n\subset Q_n(X,T^p)\subset Q_n(X,T)$.
Following the proof of Theorem \ref{thm:distScra}
we know that the
set of distributionally $n$-$\delta_n$-scrambled tuples of $X^n$ is dense $G_\delta$ in $Y_0^n$.
As $Y_0$ is perfect, applying the Mycielski Theorem we can get a dense Mycielski subset $K$ of $Y_0$
which is distributionally $n$-$\delta_n$-scrambled.
\end{proof}

\begin{rem}\label{rem:Devaney-DC}
In fact, from the proof of Theorems~\ref{thm:Devany} and~\ref{thm:distScra},
we know that when $(X,T)$ is transitive with shadowing property, if there exists a closed subset $Y$ of $X$ with non-empty interior
such that $Y\times Y\subset Q_2(X,T)$ then $(X,T)$ is  distributionally chaotic.
It should be noticed that there are some distributionally chaotic systems
which do not have this property.
For example, for the product system of the full shift and an odometer,
it is transitive and has the shadowing property.
It is distributionally chaotic and has positive entropy, because so is the full shift.
But the interior of the regional proximal relation is empty.
This leads us to ask the following question.
\end{rem}

\noindent\textbf{Question:}
For a transitive system with the shadowing property, if it has the positive topological entropy, is it distributionally chaotic (in the sense of DC1)?

\medskip

As said in the introduction, there exist examples which are of positive topological entropy but not DC1.
However, since we know that positive topological entropy implies chaos DC2 \cite{Downarowicz2014, HLY14},
and so hopefully with some additional conditions
we can get DC1 from positive topological entropy, and the shadowing property maybe is a suitable one.
Note that at this time the system does not need to have a periodic point,
for example the product system of an odometer with the tent map.
This means that more tools are needed to be developed for the investigation of the above question.

\medskip

\noindent\textbf{Final Remarks.}
One of the referees suggested that
there is another approach to get a weaker form of Theorem~\ref{thm:Devany}.
By Theorem 6 in \cite{RW2008}, a transitive system with fixed point is chain mixing.
It is easy to see that a chain mixing map with the shadowing property is topologically mixing.
By Theorem 1 in \cite{KO12},  we know that if a transitive system has the shadowing property and
a fixed point, then it has the specification property.
Now applying Theorem 1 in \cite{Oprocha2007} it is distributionally chaotic.

Note that our proof of Theorem~\ref{thm:Devany} is new and
Lemmas \ref{lem:asy-eps} and \ref{lem:distal-eps} are of independent interest.

\section{Acknowledgments}
The authors would like to thank the anonymous referees for
his/her valuable comments and suggestions, especially for reminding
several earlier results can give a weaker form of Theorem~\ref{thm:Devany},
as mentioned in the Final Remarks.

Jian Li was supported in part by NNSF of China (11401362 and 11471125), Jie Li was supported in part
by NNSF of China (11371339, 11431012 and 11571335) and Siming Tu was supported by FONDECYT Grant 3150002.

\end{document}